\newtheorem{mypro}{Proposition}[section]
\begin{document}
\articletype{Article Type}%
\raggedbottom
\title{Techniques for Accelerating the Convergence of Restarted GMRES Based on the Projection
\protect\thanks{Supported by National Natural Science Foundation of China (11101071, 11271001) and Fundamental Research Funds for the Central Universities (ZYGX2016J131, ZYGX2016J138).}}

\author[1]{Hou-biao Li*}

\author[1]{Peng-hui He}

\author[2]{Shao-Liang Zhang}

\authormark{Hou-biao Li \textsc{et al}}

\address[1]{\orgdiv{School of Mathematical Sciences}, \orgname{University of Electronic Science and Technology of China}, \orgaddress{\state{Sichuan}, \country{China}}}

\address[2]{\orgdiv{Department of Applied Physics}, \orgname{Nagoya University}, \orgaddress{\state{Nagoya}, \country{Japan}}}

%\address[3]{\orgdiv{Org Division}, \orgname{Org Name}, \orgaddress{\state{State name}, \country{Country name}}}

\corres{*Hou-biao Li, No.2006, Xiyuan Ave, West Hi-Tech Zone, 611731. \email{lihoubiao0189@163.com}}

\presentaddress{No.2006, Xiyuan Ave, West Hi-Tech Zone, 611731,China}

\abstract[Summary]{In this paper, we study the restarted Krylov subspace method, which is typically represented by the GMRES(m) method. Our work mainly focused on the amount of change in the iterative solution of GMRES(m) at each restart. We propose an extension of the GMRES(m) method based on the idea of projection. The algorithm is named as LGMRES. In addition, LLBGMRE method is also obtained by adding backtracking restart technology to LGMRES. Theoretical analysis and numerical experiments show that LGMRES and LLBGMRES have better convergence than traditional restart GMRES(m) method.}

\keywords{GMRES method, Look-Back-type restart, Projection}

%\jnlcitation{\cname{\author{Williams K.},\author{B. Hoskins},\author{R. Lee},\author{G. Masato},\author{T. Woollings}} (\cyear{2016})
%\ctitle{A regime analysis of Atlantic winter jet variability applied to evaluate HadGEM3-GC2}, \cjournal{Q.J.R. Meteorol. Soc.}, \cvol{2017;00:1--6}.}

\maketitle

%\footnotetext{\textbf{Abbreviations:} GMRES method, Look-Back-type restart, Projection}

\section{Introduction}

In recent year, there has been extensive research on Krylov subspace method for solving large and sparse linear systems of the form
\begin{equation}\label{1.1}
  Ax=b,A \in \mathbb{R}^{n\times n}, x,b \in \mathbb{R}^n.
\end{equation}
Assuming that the coefficient matrix $A$ is non-Hermitian and non-singular.
The above linear equations are often generated by the discretization of partial differential equations. In computational science and engineering, the coefficient matrices of linear algebraic equations are often constructed into a large sparse matrix.

The iterative method is the most common method to solve large, sparse and non-Hermitian linear systems (\ref{1.1}). Recently, the Krylov subspace methods \cite{2} such as FOM(m)\cite{3} method and GMRES(m) method \cite{4} are recognized as standard algorithms for this kind of linear systems, for details see \cite{1}. For GMRES(m) methods, when the matrix $A$ is symmetric, convergence depends only on its eigenvalues, when $A$ is nonsymmetric but diagonalizable, convergence depends on the eigenvalues and the eigenvectors \cite{5}. In addition, the (two-sided) Lanczos-based Krylov subspace Methods also have some difficulties in terms of the pseudo convergence due to accumulations of the round-off errors. In order to remedy these difficulties, the restart is often applied to the Krylov subspace methods. In this paper, we investigate the restarted Krylov subspace methods, as typified by the GMRES(m) method. But the restart generally slows their convergence, some ways to accelerate convergence are considered by the update of the initial, for more details see \cite{6,7}. For example, the initial of Look-back-Type restart method is obtained by a special method in \cite{1}. The preconditioning technology is used to speed up the GMRES method \cite{8,9,10,11,12}. The multipreconditioned GMRES (MPGMRES)\cite{13} is also proposed by using two or more preconditioners simultaneously. Meanwhile, a selective version of the MPGMRES is presented to overcome the rapid increase of the storage requirements and make it practical.

Based on GMRES method and Look-back-type method, this paper accelerates the above two algorithms again. LGMRES method and LLBGMRES method are proposed respectively. The principle of GMRES algorithm is to project residual error into a space, so that the length of residuals is decreasing. Every projection will get a projection vector. Accelerate the GMRES method algorithm by using the relationship between adjacent projection vectors. With this idea, we accelerate the GMRES(m) method and Look-Back-type method \cite{1}. The performance of LGMRES method and LLBGMRES method is evaluated by a comparison analysis based on the theoretical analysis and some numerical experiments.

This paper is organized as follows. In Section 2, we briefly describe a general form of the restarted Krylov subspace methods and GMRES(m) method. The restarted GMRES(m) method and a Look-Back-type restart are introduced in Section 3. For GMRES(m) and Look-back GMRES(m) methods take an acceleration strategy and put forward their deformable body LGMRES and LLBGMRES methods in Section 4. We analyze the convergence of the algorithm and prove that the modified algorithm is convergent in the Section 4. In Section 5, the performance of the LGMRES and LLBGMRES methods is evaluated by some numerical experiments. Finally, our conclusions are summarized in Section 6.

\section{The GMRES(m)}

Let $x_{0}$  denote an initial guess for the solution of system (\ref{1.1}), and $r_{0}=b-Ax_{0}$ the corresponding initial residual.
Choose two space of dimension $1\leq m \leq n$, $\mathbb{K}_{m}$ and $\mathbb{L}_{m}$, Petrov-Galerkin method solves $Ax=b$ by requiring:
$$x_{m}\in x_{0}+\mathbb{K}_{m},$$
$$b-Ax_{m}\perp \mathbb{L}_{m}.$$
A Krylov method takes
$$\mathbb{K}_{m}=\mathcal{K}_{m}(A,r_{0})=span\{r_{0},Ar_{0},...,A^{m-1}r_{0}\},$$
which is called as a Krylov space.
The central idea of Krylov method is projection. GMRES(m) method is a special case of Krylov method. It is oblique projection.
$$\mathbb{L}_{m}=A\mathbb{K}_{m}.$$

Let $V$ and $W$ be the subspaces $\mathbb{K}_{m}$ and $\mathbb{L}_{m}$ respectively, Solve
\begin{equation}\label{1.2}
W^{T}AVz=W^{T}r_{0},
\end{equation}

\begin{equation}\label{1.3}
x=x_{0}+Vz,
\end{equation}

\begin{equation}\label{1.4}
r=b-Ax=b-A(x+Vz)=r_{0}-AVz=r_{0}-Wz,
\end{equation}
and $$r_{0}-Wz\perp W.$$

This is equivalent to the projection of $r_{0}$ on the $\mathbb{L}_{m}$. $Wz$ is the projection of $r_{0}$ on $\mathbb{L}_{m}$. $r$ is perpendicular to $\mathbb{L}_{m}$. This ensures that the length of residuals in the whole iteration process is decreasing. According to the principle of projection, we are easy to get

$(1)$ If $r_{0}\perp \mathbb{L}_{m}$, then $r=r_{0}$, the reduction of residual model length is 0.

$(2)$ If $r_{0}\in \mathbb{L}_{m}$, then $r=0$, the corresponding $x$ is the exact solution.

$(3)$ If the angle between $r_{0}$ and$\mathbb{L}_{m}$ becomes smaller, the reduction of residuals modulus length is larger.

\begin{tabular}{l}
\hline
%\multirow{2}{}{$\beta$}\multirow{2}{1cm}{n}&~&a&~&~&b&~&\\
\multirow{1}{8cm}{Algorithm 1.  The GMRES(m) method \cite{18}}\\
  \hline
  % after \\: \hline or \cline{col1-col2} \cline{col3-col4} ...
  %\multirow{4}{2cm} {1.1} &1&2&3&4&5&6\\
1: Chose the restart frequency $m$ and the initial guess $x_{0}^{(1)}$\\
2: Compute $r_{0}=b-Ax_{0}^{1}$ and set $\beta=\ \ \| r_{0}\|$\\
3: for $j=1,2,...,m$, do\\
4:     Compute $w_{j}=Av_{j}$\\
5:     for $i=1,2,...,j$\\
6: $h_{i,j}=(w_{j},v_{i})$\\
7: $w_{j}=w_{j}-h_{i,j}v_{i}$\\
8:     end for\\
9: $h_{j+1,j}=\ \ \| w_{j}\|$\\
10: $v_{j+1}=w_{j}/h_{j+1,j}$\\
11: end for\\
12: Define the $(m+1)\times m$ Hessenberg matrix $H_{m}=\{h_{i,j}\}_{1\leq i \leq m+1,1 \leq j \leq m}$\\
13: $z^{l}=V_{m}s_{m}$ ,where $s_{m}=\mathop{\arg\min}_{s\in \mathbb{R}^{m}} \ \ \| \beta e_{1}-H_{m}s\|_{2}$\\
14: $x^{(t)}=x_{0}^{(t)}+z^{(t)}$\\
14: Computer $r^{(t)}=b-Ax^{(t)}$. If convergence then Stop\\
15: Update $x_{0}^{(t+1)}=x^{(t)}$, go on to 2\\
  \hline
\end{tabular}
\\

To analyze the convergence property of GMRES(m), the Cayley-Hamilton theorem can be applied: for any $A\in \mathbb{P}^{n \times n}$, $f(\lambda)=|\lambda E-A|$ is its characteristic polynomial, then $f(A)=0$. So for any $A\in \mathbb{P}^{n \times n}$, we can always find a smallest polynomial that makes $f(A)=0$. Here we use $P_{d+1}(\lambda)$ to represent the smallest polynomials of $A$, i.e., $P_{d+1}(A)=0$, and
$$\forall P_{k} \in \mathbb{P}_{d}, P_{k}(A)\neq 0, k\leq d,$$
where $P_{d}$ is the set of $d$-degree polynomials. Define
$$P_{d+1}(A)=a_{0}E+a_{1}A+...+a_{d}A^{d}=0.$$

For convenience, we can change the constant coefficient of the minimum polynomial to 1, i.e., $a_{0}=1$. If $A$ is a nonsingular matrix, then
$$A^{-1}=a_{1}E+a_{2}A+..+a_{d}A^{d-1},$$
$$x^{*}=A^{-1}b=(a_{1}E+a_{2}A+...+a_{d}A^{d-1})b,$$
$$x^{*}=x_{0}+(a_{1}E+a_{2}A+...+a_{d}A^{d-1})r_{0}.$$

So $x^{*}\in x_{0}+\mathcal{K}_{d}(A,r_{0})$, and this ensures the feasibility of GMRES method and the residual is declining in the iterative process. This makes it possible for us to get the best approximation solution in Krylov subspace, see the more details in [5].

In addition, according to [5], ${r}={{r}_0}-{{Wz}} \in {{\cal K}_{{\rm{m}} + 1}}(A,{r_0})$, so there exists a polynomial ${\psi _m}(\lambda ) \in {P_m}$ such that ${{r}}={\psi_m}(A){r_0}$.
When the matrix $A$ is diagonalizable, there exists an nonsingular matrix $V$ and a diagonal matrix $\Lambda  = diag({\lambda _1},{\lambda _2},...,{\lambda _n})$ of the eigenvalues such that $A = V\Lambda {V^{ - 1}}$,
and we can further obtain
\[\left\| r \right\| \le \left\| {V{\psi _m}(A){V^{ - 1}}{r_0}} \right\| \le \kappa (V)\mathop {\min }\limits_{{\psi _m}(0) = 1} \mathop {\max }\limits_{1 \le i \le n} \left| {{\psi _m}({\lambda _i})} \right| \cdot \left\| {{r_0}} \right\|.\]
That is to say, the Euclidean norm of the residual is determined the eigenvalues and the eigenvectors of the matrix $A$. If matrix $A$ is symmetric, then matrix $V$ is an orthogonal matrix of the eigenvectors of the matric $A$, so $\kappa(V)=1$, then the convergence depends only on its eigenvalues.

\section{The Restarted GMRES(m) with a Look-Back-Type restart}
\subsection{The Restarted GMRES(m)}

Next, let us consider the restarted GMRES(m) method [6].
$x_{0}^{(t)}$ is the initial guess, $r_{0}=b-Ax_{0}^{(t)}$ is the sequence of the corresponding residual vectors.
$x^{(t)}=x_{0}^{(t)}+z^{(t)}$, $z^{(t)}\in \mathcal{K}_{m}(A,r_{0}^{(t)})$, where the vectors $z^{(t)}$ are designed by:
\begin{equation}\label{3.1}
  z^{(t)}=\mathop{\min}_{z \in \mathcal{K}_{m}(A,r_{0}^{(t)})} \ \ \| r_{0}^{(t)}-Az\|_{2}.
\end{equation}
So
 $$x^{(t)}\in x_{0}^{(1)}+\mathcal{K}_{m}(A,r_{0}^{(1)})+\mathcal{K}_{m}(A,r_{0}^{(2)})+...+\mathcal{K}_{m}(A,r_{0}^{(t)})\\=x_{0}^{(1)}+\mathcal{K}_{m\times t}(A,r_{0}^{(t)}).$$
In addition, Look-Back-type method [6] is proposed by adding the amended direction $y^{(t+1)}$, $$y^{(t+1)}\in\mathcal{K}_{m}(A,r_{0}^{(t)}),$$
$$x_{0}^{(t+1)}=x^{(t)}+y^{(t+1)},$$
to keep $x^{(t)}\in x_{0}^{(1)}+\mathcal{K}_{m\times t}(A,r_{0}^{(1)})$.

\subsection{A Look-Back-type restart}

By applying the sequence of $m^{(t)}$ dimensional Krylov subspaces ${{\cal K}_{{m^{({\rm{t}})}}}}(A,r_0^{(t)})$ for the linear systems (\ref{1.1}), the iterative solutions ${{{x}}^{(t)}}$ can be expressed as:
\begin{equation}\label{3.2}
  {x^{(t)}} = x_0^{(t)} + {z^{(t)}},{z^{(t)}} \in {{\cal K}_{{m^{({\rm{t}})}}}}(A,r_0^{(t)}),{\rm{   }}t = 1,2,....
\end{equation}
For more details, see [6]. And we have that
$${x^{(t)}} \in x_0^{(1)} + {{\cal K}_{{{\rm{M}}^{({\rm{t}})}}}}(A,r_0^{(1)}),{\rm{   }}{{\rm{M}}^{(t)}}{\rm{ = }}\sum\limits_{i = 1}^t {{m^{(i)}}} {\rm{   }}, t = 1,2,...$$
After amending the initial value $x_0^{(t + 1)} = {x^{(t)}} + {y^{(t + 1)}},t = 1,2,...$.
For any ${z^{(t)}} \in {{\cal K}_{{m^{({\rm{t}})}}}}(A,r_0^{(t)})$ (t=1,2,¡­),
$${x^{(t)}} \in x_0^{(1)} + {{\cal K}_{{{\rm{M}}^{({\rm{t}})}}}}(A,r_0^{(t)}),{\rm{   }}{{\rm{M}}^{(t)}}{\rm{ = }}\sum\limits_{i = 1}^t {{m^{(i)}}} {\rm{   }}, t = 1,2,...,$$
is satisfied if and only if
${{\rm{y}}^{(t)}} \in {{\cal K}_{{{\rm{M}}^{({\rm{t}})}}}}(A,r_0^{(1)}),{\rm{     }}t = 1,2,...,$
$${{\rm{r}}^{(t)}} = b - A{x^{(t)}},{x^{(t)}} \in x_0^{(1)} + {{\cal K}_{{{\rm{M}}^{({\rm{t}})}}}}(A,r_0^{(1)}),{\rm{t}}=1,2,...$$
%$${{\rm{r}}^{(t)}} \in {{\cal K}_{{{\rm{M}}^{({\rm{t}})}}}}(A,r_0^{(1)})$$
So ${{\rm{r}}^{(t)}} \in {{\cal K}_{{{\rm{M}}^{({\rm{t}})}} + 1}}(A,r_0^{(1)})$. In addition, there exists a polynomial ${\rm{Q}}_{{{\rm{M}}^{(t)}}}^{(t)}(\lambda ) \in {\mathbb{P}_{{M^{(t)}}}}$   such that
$${r^{(l)}} = {\rm{Q}}_{{{\rm{M}}^{(t)}}}^{(t)}({\rm{A}}){\rm{r}}_0^{(1)}{\rm{,   Q}}_{{{\rm{M}}^{(t)}}}^{(t)}(0){\rm{=}}1.$$
${{\rm{y}}^{(t + 1)}} \in {{\cal K}_{{{\rm{M}}^{({\rm{t}})}}}}(A,r_0^{(1)})$ . There also exists a polynomial
${\rm{\widetilde{Q}}}_{{{\rm{M}}^{(t)}}}^{(t)}(\lambda ) \in {\mathbb{P}_{{M^{(t)}}}}$ such that
\[{\rm{\widetilde{Q}}}_{{{\rm{M}}^{(t)}}}^{(t)}(A){\rm{r}}_0^{(1)} = {\rm{Q}}_{{{\rm{M}}^{(t)}}}^{(t)}({\rm{A}}){\rm{r}}_0^{(1)} - A{y^{(t + 1)}}{\rm{,   \widetilde{Q}}}_{{{\rm{M}}^{(t)}}}^{(t)}{\rm{(0) = }}1.\]
So we can get
\[{y^{(t + 1)}} = {{\rm{A}}^{{\rm{ - }}1}}{\rm{(Q}}_{{{\rm{M}}^{(t)}}}^{(t)}({\rm{A}}){\rm{ -\widetilde{Q}}}_{{{\rm{M}}^{(t)}}}^{(t)}(A)){\rm{r}}_0^{(1)}.\]
The author represents ${\rm{\widetilde{Q}}}_{{{\rm{M}}^{(t)}}}^{(t)}(\lambda )$ as a linear combination of
${\rm{Q}}_{{{\rm{M}}^{(t)}}}^{(t)}(\lambda )$ and at most $M^{(t)}-$degree polynomial ${\rm{R}}_{{{\rm{M}}^{(t)}}}^{(t)}(\lambda ) \in {\mathbb{P}_{{M^{(t)}}}}{\rm{,R}}_{{{\rm{M}}^{(t)}}}^{(t)}(0) = 1$,i.e.,
\[{\rm{\widetilde{Q}}}_{{{\rm{M}}^{(t)}}}^{(t)}(\lambda ){\rm{ = }}{\tau ^{(t)}}{\rm{Q}}_{{{\rm{M}}^{(t)}}}^{(t)}{\rm{(}}\lambda {\rm{) + (1 - }}{\tau ^{(t)}}{\rm{)R}}_{{{\rm{M}}^{(t)}}}^{(t)}{\rm{(}}\lambda {\rm{), }}{\tau ^{(t)}} \in \mathbb{C}.\]
They defined the polynomial ${\rm{R}}_{{{\rm{M}}^{(t)}}}^{(t)}(\lambda )$ by using the $d$-th previous polynomial before
${\rm{Q}}_{{{\rm{M}}^{(t)}}}^{(t)}(\lambda )$ and ${\rm{\widetilde{Q}}}_{{{\rm{M}}^{(t)}}}^{(t)}(\lambda )$ as follows:
\[{\rm{R}}_{{{\rm{M}}^{(t)}}}^{(t)}(\lambda ):{\rm{ = }}\left\{ {\begin{array}{*{20}{c}}
{{\rm{Q}}_{{{\rm{M}}^{{\rm{(}}{{\rm{t}}_d})}}}^{({t_{\rm{d}}})}(\lambda ){\rm{    (}}d:even)}\\
{{\rm{\widetilde{Q}}}_{{{\rm{M}}^{{\rm{9}}{{\rm{t}}_d})}}}^{({t_{\rm{d}}})}(\lambda ){\rm{    (}}d:even)}
\end{array}} \right. {\rm{,  }}{{\rm{t}}_d}: = \left\{ {_{t - \frac{{d + 1}}{2}{\rm{   (d:odd)}}}^{t - \frac{d}{2}{\rm{(d:even)}}}} \right.{\rm{,d}} \in \mathbb{N}.\]
So
\[\begin{array}{l}
{y^{(t + 1)}} = (1{\rm{ - }}{\tau ^{(t)}}{\rm{)}}{{\rm{A}}^{{\rm{ - }}1}}{\rm{(Q}}_{{{\rm{M}}^{(t)}}}^{(t)}({{A}}){\rm{ - R}}_{{{\rm{M}}^{(t)}}}^{(t)}(A)){\rm{r}}_0^{(1)}\\
{\rm{=(}}1{\rm{ - }}{\tau ^{(t)}}{\rm{)}}{{{A}}^{{\rm{ - }}1}} \bullet \left\{ {\begin{array}{*{20}{c}}
{{{\rm{r}}^{(t)}} - {r^{({t_d})}}{\rm{    }}(d:even)}\\
{{{\rm{r}}^{(t)}} - r_0^{({t_d})}{\rm{    }}(d:{\rm{odd}})}
\end{array}} \right.
\end{array}\]
Let $\Delta {x^{({\rm{t}})}}$ and  ${u^{(t)}}$ be
\[\Delta {x^{({\rm{t}})}}: = \left\{ {\begin{array}{*{20}{c}}
{{{\rm{r}}^{(t)}} - {r^{({t_d})}}{\rm{    }}(d:even)}\\
{{{\rm{r}}^{(t)}} - r_0^{({t_d})}{\rm{    }}(d:{\rm{odd}})}
\end{array}} \right.,{u^{(t)}}{\rm{ = }}{\tau ^{(t)}}{\rm{ - }}1.\]
So we can get ${{\rm{y}}^{(t + 1)}} = {u^{(t)}}\Delta {x^{({\rm{t}})}}$. This is an unfixed update. The ${{\rm{y}}^{(t + 1)}} = {u^{(t)}}\Delta {x^{({\rm{t}})}}$ is set by Look-Back-type restart technology. The algorithm of the restarted Krylov subspace method with the Look-Back-type restart is shown in Algorithm 2.

\begin{table}[!http]
\begin{tabular}{l}
\hline
Algorithm 2. A restarted Krylov method with a Look-Back-type [6]\\
\hline
1: Chose the parameterd $\geq 2$ and the initial guess $x_{0}^{(1)}$\\
2: For $t=1,2,...$, until convergence Do:\\
3: Set the restart frequency $m^{(t)}$ and the $KS^{(t)}$ method\\
4: Solve (approxima) $Ax=b$ by $m^{(t)}$ iterations of the $KS^{t}$ method with the initial guess $x_{0}^{(t)}$, and \\ get the approximate solution $x^{t}$\\
5: Computer the vector $y^{(t+1)}$ as follows:\\
If $t=1$ then $y^{(t+1)}=0$\\
If $t\geq 2$  then\\
  If $(t=d=2)$ or (d:even, $t\leq \frac{d}{2}$ ) or (d:odd, $t\leq \frac{d-1}{2}$) then\\
$\Delta x^{(t)}=x^{(t)}-x_{0}^{(t_{d})}$\\
   Else\\
$ \Delta x^{(t)} \begin{cases} x^{(t)}-x^{(t_{d})}&\text{d:even}\\x^{(t)}-x_{0}^{(t_{d})}&\text{d:odd}
\end{cases}$\\
   End If\\
$y^{(t+1)}=u^{(t)} \Delta$, $u^{(t)}==\mathop{\arg\min}_{u\in \mathbb{C}} \ \ \| r^{(t)}-uA\Delta x\|_{2}$\\
6: Update the initial guess $x_{0}^{(t+1)}=x^{(t)}+y^{(t+1)}$\\
7: End For\\
\hline
\end{tabular}
\end{table} \

\section{Techniques for Accelerating the Convergence of Restarted GMRES(m) Based on the Projection}

\subsection{LGMRES(m) method}
Recall the process of the GMRES(m) method: $Az^{(1)}$ is the projection of $r_{0}$ on $Az^{(1)}$, $r_{1}=r_{0}-Az^{(1)}$, $Az^{(2)}$ is the projection of $r_{1}$ on $Az^{(2)}$, $r_{2}=r_{1}-Az^{(2)}$, its idea is shown Figure 1(a). Therefore, the restart GMRES(m) algorithm is equivalent to continuously projecting $r_{i}$ to $Az^{(i+1)}$ ($i=0,1,2,...$).
\begin{figure}[!htbp]
\begin{minipage}[t]{0.5\linewidth}
\centering
\subfigure[The idea of GMRES(m)]{\includegraphics[width=2.7in,height=1.7in]{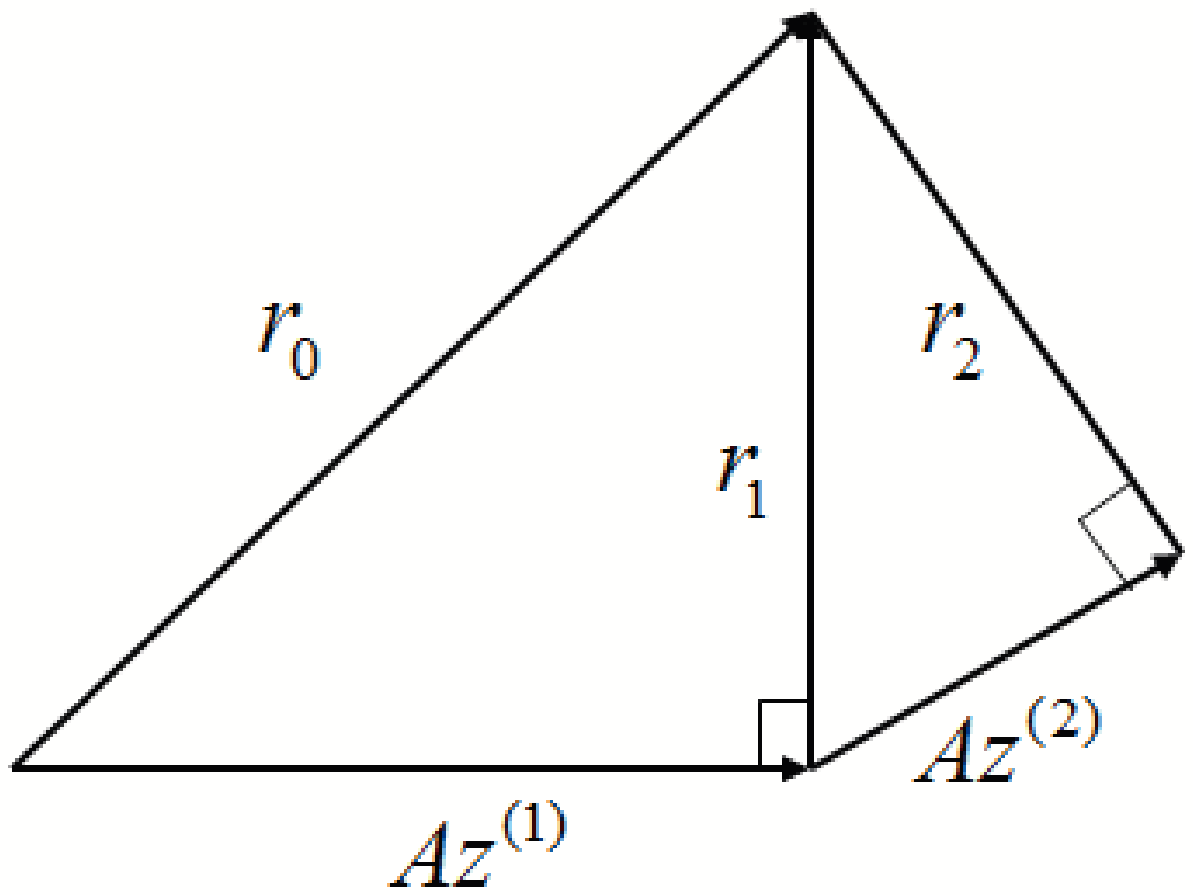}}
\label{fig:side:a}
\end{minipage}%
\begin{minipage}[t]{0.5\linewidth}
\centering
\subfigure[The idea of our method]{\includegraphics[width=2.7in,height=1.7in]{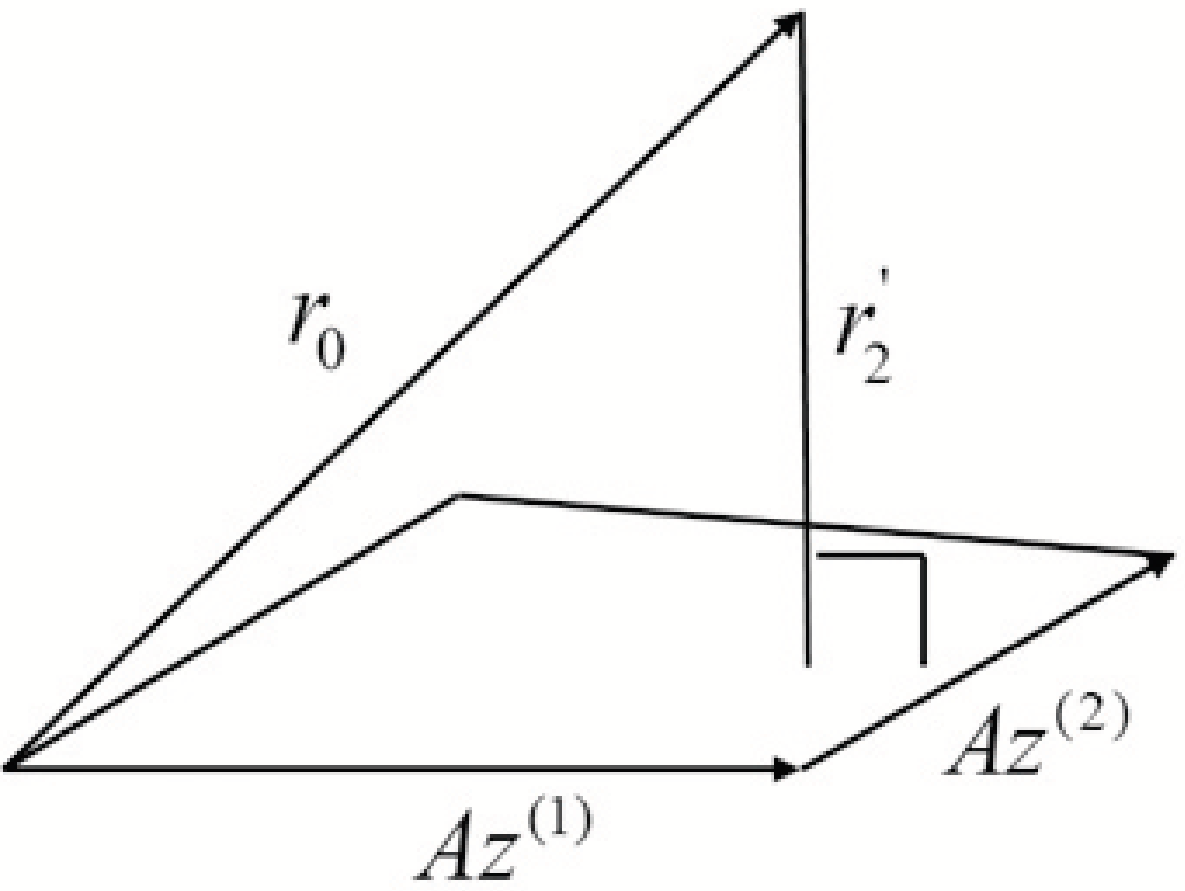}}
\label{fig1}
\end{minipage}
\caption{The relationship of GMRES(m) and our method.}
\end{figure}

If let $r_{2}^{'}$ be the projection of $r_{0}$ on the plane of which $Az^{(1)}$ and $Az^{(2)}$ form, its idea is shown Figure 1(b). So we can easily get $\|r_{2}^{'}\|_{2}\leq \|r_{2}\|_{2}$. According to this idea, let us redesign the restarted GMRES(m) method as follows.

For restarted GMRES(m), $\mathbb{L}_{m}=A\mathbb{K}_{m}$, $W=AV$.
$$x^{(1)}=x_{0}^{(1)}+z^{(1)},$$
$$x^{(2)}=x_{0}^{(2)}+z^{(2)},$$
Induction can be obtained that
$$x^{(l)}=x_{0}^{(1)}+z^{(1)}+z^{(2)}+...+z^{(l)},$$
$$r_{l}=b-Ax^{(l)}=r_{0}-A(z^{(1)}+z^{(2)}+...+z^{(l)}).$$
Set $R=[z^{(1)}\quad z^{(2)}\quad ...\quad z^{(l)}]$, solve
\begin{equation}\label{3.3}
r_{l}^{'}=\mathop{\min}_{y\in R^{l}} \ \ \| b-ARy\|_{2},
\end{equation}
i.e.,
$$R^{T}A^{T}ARy=R^{T}A^{T}b.$$
Suppose that the column of the matrix $R$ is full rank, then
$$y=(R^{T}A^{T}AR)^{-1}R^{T}A^{T}b,$$
so $x{'}^{(t)}=x_{0}^{(1)}+Ry$ and $\|r_{l}^{'}\|_{2}\leq \|r_{l}\|$.
If $y=[1,1,...,1]^{T}_{l\times 1}$, then $r_{l}=ARy$. We take this technique as a restart GMRES(m) base on the projection (abbreviated as LGMRES(m) method, see Algorithm 3.). Since the number $l$ is usually relatively small, the workload of solving equation (\ref{3.3}) is relatively small and fast.

%This acceleration scheme is the same as the projection idea (see Figure 1(b)) mentioned above.

%We can prove it.
%$$r_{l}=r_{0}-A(z^{(1)}+z^{(2)}+...+z^{(l)}).$$

%According to the idea of GMRES algorithm, we can get $r_{0}-(Az^{(1)}+Az^{(2)}+...+Az^{(i)})\perp Az^{(i)}(i=1,2...,l)$,
%And the projection of $r_{0}-(Az^{(1)}+Az^{(2)}+...+Az^{(i-1)})$  on $Az^{(i)}$ is  $Az^{(i)}$ (i=2,3,...,l). Therefore, the Restart GMRES(m) algorithm is equivalent to continuously projecting $r_{i}$ to $Az^{(i+1)}$ ($i=0,1,2,...$). $r_{l}^{'}$ perpendicular the space which formed by  $Az^{(1)}$ ,$Az^{(2)}$ ,... ,and $Az^{(l)}$ . So we can get the same:$\|r_{l}^{'}\|_{2}\leq \|r_{l}\|.$

In addition, the relational expression
$$x^{(t)}\in x_{0}^{(1)}+\mathbb{K}_{m\times t}(A,r_{0})$$
is still guaranteed in this iteration process. According to the experiments and the principle of projection, when the linear correlation between $Az^{(1)}$, $Az^{(2)}$, $...$, and $Az^{(l)}$ is weaker or even linearly independent, the acceleration effect is better, that is, the value of $\|r_{l}\|_{2}-\|r_{l}^{'}\|_{2}$ is larger. If the convergent threshold is still not reached after many restarts, which indicates that the linear correlation after each reboot is weak. At this time, select a smaller $l$ at the early stage of the iteration. In the later period of iteration, we can appropriately increase the value of the $l$. This can avoid linear correlation between column vectors of $R$ and speed up its convergence.

\begin{mypro}
 The LGMRES(m) method satisfies the monotonic decrease of the residual 2-norm:
$$\|r_{t+1}^{'}\|_{2}\leq \|r_{t}^{'}\|_{2}, (t=l,l+1,...).$$
\end{mypro}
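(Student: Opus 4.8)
The plan is to recognize the LGMRES residual $r_t'$ as the value of a least-squares problem whose feasible search space \emph{grows} with $t$, and then to invoke the elementary fact that minimizing a fixed objective over a larger set cannot increase its optimal value. First I would fix notation: for each $t\ge l$ set $R_t=[\,z^{(1)}\ z^{(2)}\ \cdots\ z^{(t)}\,]$ and let $S_t=\mathrm{span}\{z^{(1)},\dots,z^{(t)}\}=\mathrm{range}(R_t)$ be the accumulated correction subspace. By the defining relation \eqref{3.3}, the accelerated iterate is $x'^{(t)}=x_0^{(1)}+R_t y_t$ with $y_t=\mathop{\arg\min}_{y\in\mathbb{R}^{t}}\|b-AR_t y\|_2$, so that
\[\|r_t'\|_2=\min_{y\in\mathbb{R}^{t}}\|b-AR_t y\|_2=\min_{x\in x_0^{(1)}+S_t}\|b-Ax\|_2.\]
The structural observation that drives everything is that passing from step $t$ to step $t+1$ only \emph{appends} one column, $R_{t+1}=[\,R_t\ z^{(t+1)}\,]$, so $S_t\subseteq S_{t+1}$ and hence $x_0^{(1)}+S_t\subseteq x_0^{(1)}+S_{t+1}$.

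Second I would conclude directly. Since the step-$(t+1)$ search space contains the step-$t$ search space while the objective $x\mapsto\|b-Ax\|_2$ is unchanged,
\[\|r_{t+1}'\|_2=\min_{x\in x_0^{(1)}+S_{t+1}}\|b-Ax\|_2\ \le\ \min_{x\in x_0^{(1)}+S_t}\|b-Ax\|_2=\|r_t'\|_2.\]
Concretely, the previous minimizer $y_t\in\mathbb{R}^{t}$ corresponds, after zero-padding to $[\,y_t;0\,]\in\mathbb{R}^{t+1}$, to the same point $x'^{(t)}=x_0^{(1)}+R_{t+1}[\,y_t;0\,]$, which is therefore feasible for the step-$(t+1)$ problem; as $r_{t+1}'$ is minimal there, $\|r_{t+1}'\|_2\le\|b-Ax'^{(t)}\|_2=\|r_t'\|_2$. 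Since this holds for every $t\ge l$, the claimed monotone decrease follows immediately.

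The one point genuinely requiring care — and the only place the argument could break — is verifying that the subspaces are truly nested, i.e.\ that the algorithm \emph{retains} all previously computed corrections $z^{(1)},\dots,z^{(t)}$ when it forms $R_{t+1}$, rather than discarding old directions or sliding a fixed-size window; were old columns dropped, one would generally have $S_t\not\subseteq S_{t+1}$ and monotonicity would no longer be forced. I would also remark that the full-column-rank hypothesis used in \eqref{3.3} (to write the normal-equation formula for $y$) is \emph{not} needed here: even if $z^{(t+1)}\in S_t$ so that $R_{t+1}$ is rank-deficient, one has $S_{t+1}=S_t$ and the displayed inequality simply holds with equality, which is still consistent with the stated monotone bound.
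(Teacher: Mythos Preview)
Your argument is correct and takes a genuinely different route from the paper's. You treat $\|r_t'\|_2$ directly as the optimal value of a least-squares problem over the affine set $x_0^{(1)}+S_t$, observe that $S_t\subseteq S_{t+1}$ because $R_{t+1}=[\,R_t\ z^{(t+1)}\,]$, and monotonicity is immediate from the enlarging feasible set. The paper instead interposes the \emph{unaccelerated} GMRES(m) residuals $r_t$: it records $\|r_t'\|_2\le\|r_t\|_2$ (the all-ones vector is feasible in \eqref{3.3}) and the standard GMRES bound $\|r_{t+1}\|_2\le\|r_t\|_2$, and then asserts the conclusion. Your approach is more direct and self-contained; the paper's route has the nominal advantage of never invoking $S_t\subseteq S_{t+1}$, but as written its last step does not actually follow, since from $\|r_{t+1}'\|_2\le\|r_{t+1}\|_2\le\|r_t\|_2$ together with $\|r_t'\|_2\le\|r_t\|_2$ one obtains only $\|r_{t+1}'\|_2\le\|r_t\|_2$, not $\le\|r_t'\|_2$. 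Your closing caveat is also well placed and is in fact realized in the paper: the practical variant introduced immediately after the Proposition (Section~4.2 and Algorithm~3) overwrites columns of $R$ cyclically to keep $R\in\mathbb{R}^{n\times l}$ of fixed width, so there the nesting $S_t\subseteq S_{t+1}$ genuinely fails and your argument covers only the growing-$R$ formulation of Section~4.1 in which the Proposition is stated.
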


\begin{proof}
Note that GMRES(m) uses residual vector $r$ to make projection to planar $\mathbb{L}_{m}$, so that the residual norm $\|r\|_{2}$ is monotonically decreasing throughout the iteration process, thus ensuring that the GMRES(m) algorithm is convergent. GMRES method forms residual sequence $r_{t} (t=1,2,...)$ and projection plane sequence $L_{t} (t=1,2,...)$  in the whole iteration process. $Az^{(t)} (t=1,2,..)$ is the projection of $r_{t-1}$ on $L_{t}$. We can easily get:
 $$r_{t}=r_{t-1}-Az^{(t)},\;\; r_{t}\bot Az^{(t)},$$
$\|r_{t}\|_{2}$ is a monotone decreasing sequence.
When $t \geq l$, we take the above acceleration strategy (\ref{3.3}). The residual error is $r_{t} (t=1,2,...)$ before acceleration, and the residual error is $r_{t}^{'}(t=l,l+1,...)$ after acceleration.

According to the principle of acceleration (\ref{3.3}), we can get
$$\|r_{t}^{'}\|_{2}\leq \|r_{t}\|_{2}\;\; (t=l,l+1,...)$$

In addition, according to the principle of GMRES method, we are easy to get
$$\|r_{t+1}\|_{2}\leq \|r_{t}\|_{2}\;\; (t=1,2,...)$$

So $$\|r_{t+1}^{'}\|_{2}\leq \|r_{t}^{'}\|_{2}\;\; (t=l,l+1,...).$$
i.e., $\|r_{t}^{'}\|_{2}\;(t=l,l+1,...)$ is also a monotone decreasing sequence. This ensures the convergence of the algorithm after acceleration, see numerical experiments in Section 5.
\end{proof}

\subsection{Update strategy for $R$}

Let $t$ be the number of iterations and $l$ is the given control parameter. According to (\ref{3.3}), when $t>l$, we replace the first column of $R$ with the newly generated $z^{(l+1)}$, and the newly generated $z^{(l+2)}$ to replace the second column of $R$. Denote $k=mod(t,l)$, $k$ is $t$ divided by the remainder of $l$. Use $z^{(t)}$ to replace the $k$-th column of $R$. If $k=0$, then $z^{(t)}$ takes the place of the last column of $R$, the $R\in \mathbb{R}^{n \times l}$ can always be maintained throughout the iteration. The former $l-1$ step of the LGMRES method is the same as that of the GMRES method. Specific details, see Algorithm 3.

\begin{table}[!htbp]
\begin{tabular}{l}
\hline
Algorithm 3. The LGMRES(m) method (t is an iterative step)\\
\hline
1: Chose the restart frequency $m$ and the initial guess  $x_{0}^{(1)}=0$, $X_{0}=0, b_{0}=b$, $t=0$\\
2: Compute $r_{0}=b-Ax_{0}^{(1)}$\\
3: $t=t+1$\\
4:  If $t<l$\\
5:  Set $\beta=\|r_{0}\|_{2}$\\
6:  Through GMRES algorithm, Obtain $V_{t}s_{t}$, $k=mod(t,l)$\\
7:  If $k=0$, then $k=l$ end if\\
8:  $R_{k}=V_{t}s_{t}$\\
9:  $x_{t}=x_{0}^{t}+V_{t}s_{t}, r_{t}=b-Ax_{t}$\\
10: else $y=(R^{T}A^{T}AR)^{-1}R^{T}A^{T}b$\\
11: $X^{(t)}=X^{(t-1)}+Ry$\\
12: $r_{t}=b-AX^{(t)}$, $x_{0}^{t+1}=x_{0}^{1}$\\
13: end if\\
14: If algorithm is convergence then stop, otherwise go to 3\\
  \hline

\end{tabular}
 \end{table} \

It is also possible to use such a principle of acceleration in Look-Back-type restarted method [6]. According Look-Back-type restarted method, the parameter $d$ is set to $d=3$, which is an odd number, so
$$t_{d}=t-\frac{d-1}{2}=t-1,$$
\begin{eqnarray*} \label{equ20}
\Delta x^{(t)}&=&x^{(t)}-x_{0}^{t_{d}}=x^{(t)}-x_{0}^{(t-1)}=x_{0}^{(t)}+z^{(t)}-x_{0}^{(t-1)}\\
&=&x^{(t-1)}+y^{(t)}+z^{(t)}-x_{0}^{(t-1)}\\
&=&x_{0}^{(t-1)}+z^{(t-1)}+y^{(t)}-x_{0}^{(t-1)}\\
&=&z^{(t-1)}+y^{(t)}+z^{(t)}.
\end{eqnarray*}

Look-Back-type restart technology is combined with LGMRES acceleration technology to get a new method LLBGRES, see Algorithm 4. This method is similar to LGMRES, the different parts are only the part of the GMRES. The part of the change is presented in the following Algorithm 4.

\begin{table}[!htbp]
\begin{tabular}{l}
\hline
Algorithm 4. The LLBGMRES Method\\
\hline
1: 1-6 steps to implement LGMRES algorithm\\
2: Through GMRES algorithm, obtain $V_{t}s_{t}$, $k=mod(t,l)$. The part of the GMRES change:\\
\indent\;\; $x_{0}^{(t+1)}=x^{(t)}\rightarrow x_{0}^{(t+1)}=x^{(t)}+y^{(t+1)}$\\
Computer the vector $y^{(t+1)}$ as follows:\\
\indent\;\;\; If $t=1$ then $y^{(t+1)}=0$\\
\indent\;\;\; else $\Delta x^{(t)}=z^{(t-1)}+y^{(t)}+z^{(t)}$\\
\indent\;\;\; End If\\
\indent\;\;\; $y^{(t+1)}=u^{(t)}\Delta x$, \;$u^{(t)}=\mathop{\arg\min}_{u\in \mathbb{C}} \ \ \| r^{(t)}-uA\Delta x\|_{2}$\\
Update the initial guess $x_{0}^{(t+1)}=x^{(t)}+y^{(t+1)}$\\
3:
7-14 steps to implement LGMRES algorithm\\
  \hline
\end{tabular}
 \end{table} \

\section{Numerical Experiments and Results}

We have proposed an effective acceleration technique of GMRES method and Look-Back-type method by considering the relationship between the changes in each restart of GMRES. To show the potential for efficient convergence, we now present some results from numerical experiments. These experiments are mainly from the linear equations generated by more than 60 practical problems in the matrix market (Available from https://sparse.tamu.edu/).

\subsection{Numerical results}

First, let us observe the convergence curve of the algorithms. The residual 2-norm convergence histories for ACTIVSg2000, COUPLED, KIM1 and MARIO001 are respectively shown in Figure 2 when $m=30$ and Figure 3 when $m=50$. Through these figures, we can know that the LGMRES and LLBGMRES method show that the computation error decreases monotonously with the increase of iteration steps. It also verifies the feasibility of the accelerated scheme from the experimental results. We can see that the acceleration effect of ACTIVSg2000, COUPLED and KIM1 is pretty good and stable. For MARIO001 (m=30), the acceleration of MARIO001 ($m=30$) is not ideal, since the number of iterations of LLBGMRES and LBGMRES is almost the same, but the convergence curve is still monotonously decreasing. From these results, it appears that the LGMRES(m) and LLBGMRES(m) methods may have a high potential for efficient convergence. The acceleration is always good and stable.

\begin{figure}[!hbp]
\begin{minipage}[t]{0.5\linewidth}
\centering
\subfigure[CAVITY10(m=30)]{\includegraphics[width=3.0in,height=2.0in]{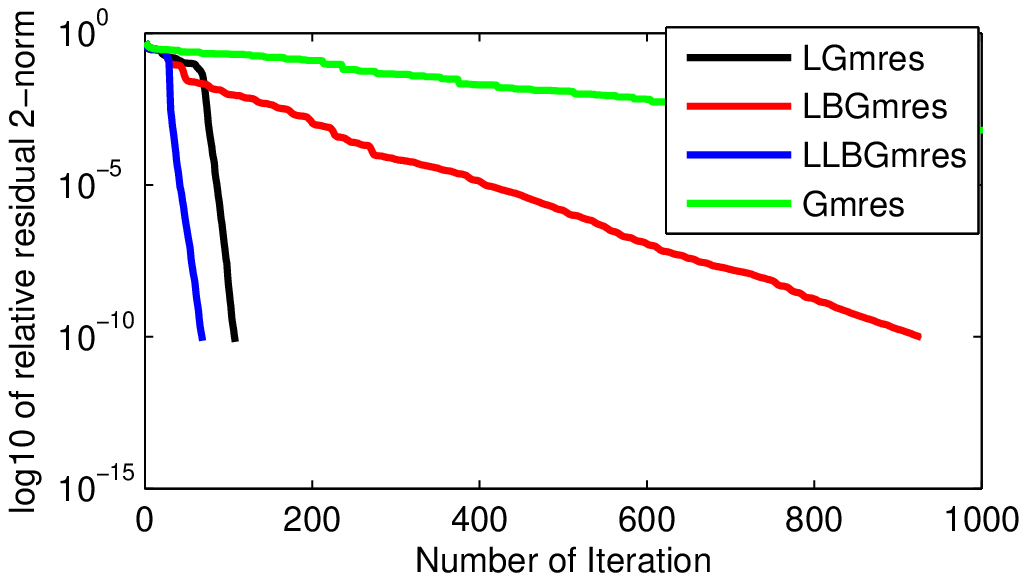}}
\label{fig:side:a}
\end{minipage}%
\begin{minipage}[t]{0.5\linewidth}
\centering
\subfigure[KIM1(m=30)]{\includegraphics[width=3.0in,height=2.0in]{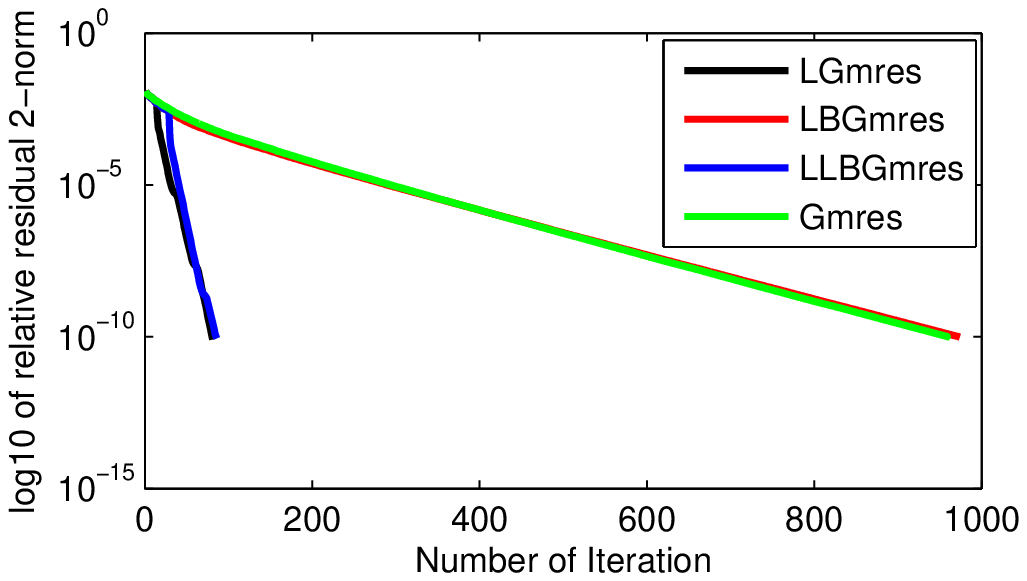}}
\label{}
\end{minipage}
\end{figure}
\begin{figure}[!hbp]
\begin{minipage}[t]{0.5\linewidth}
\centering
\subfigure[ACTIVSg2000(m=30))]{\includegraphics[width=3.0in,height=2.0in]{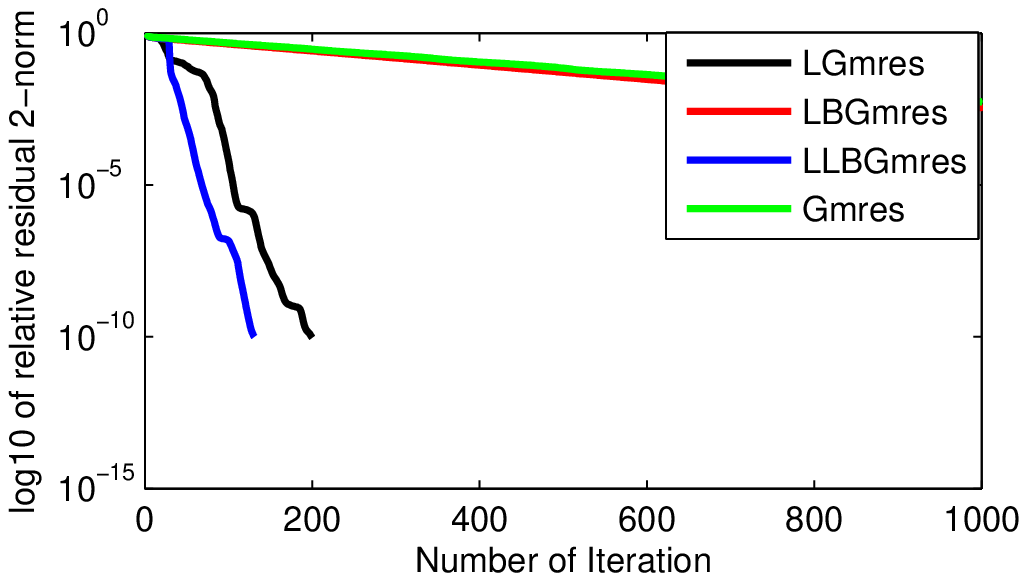}}
\label{fig:side:a}
\end{minipage}%
\begin{minipage}[t]{0.5\linewidth}
\centering
\subfigure[MARIO001(m=30)]{\includegraphics[width=3.0in,height=2.0in]{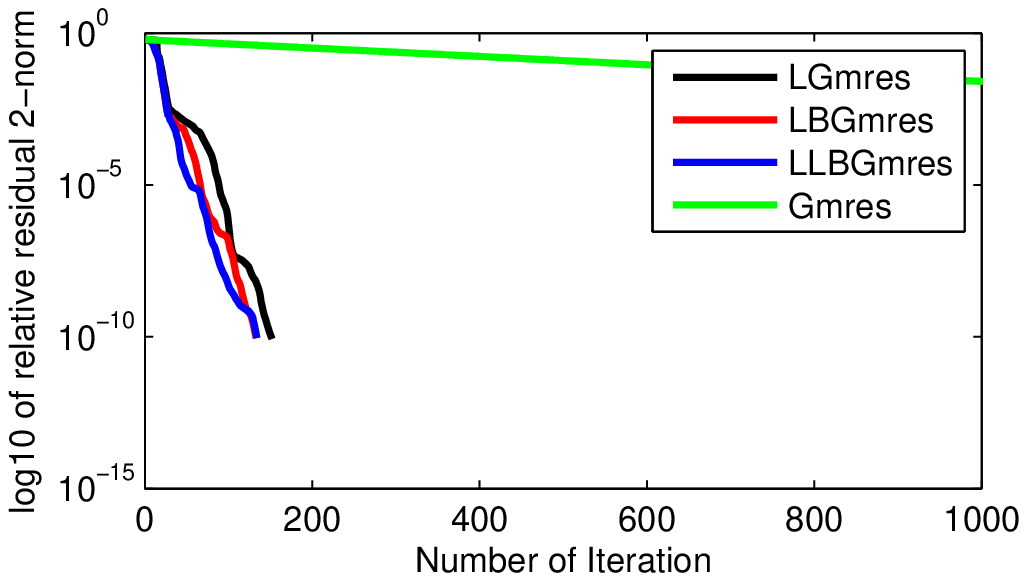}}
\label{}
\end{minipage}
\caption{The relative residual 2-norm history for ACTIVSg2000 ,COUPLED, KIM1 and MARIO001,when m=30}
\end{figure}

\begin{figure}[!hbp]
\begin{minipage}[t]{0.5\linewidth}
\centering
\subfigure[CAVITY10(m=50)]{\includegraphics[width=3.0in,height=2.0in]{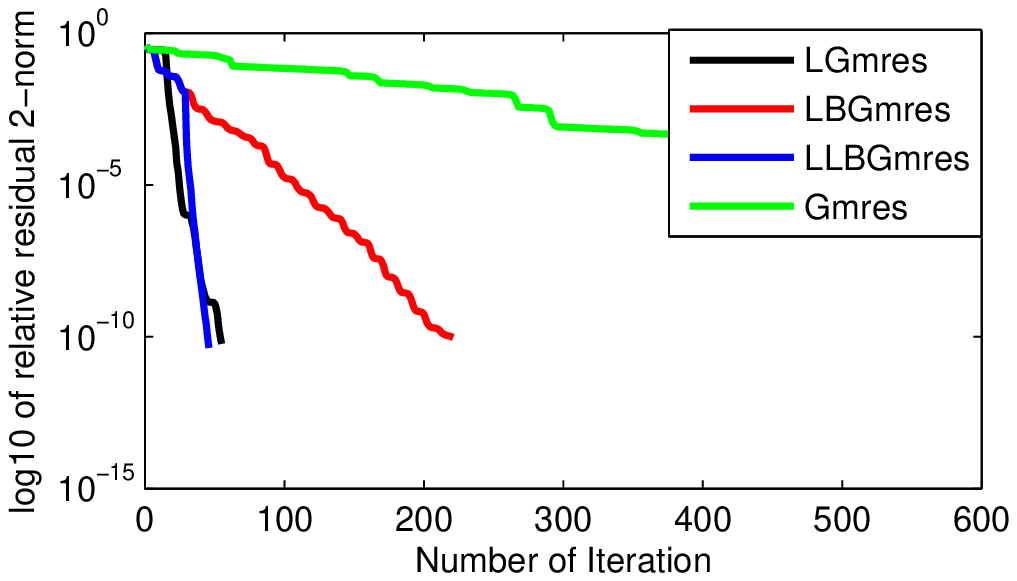}}
\label{fig:side:a}
\end{minipage}%
\begin{minipage}[t]{0.5\linewidth}
\centering
\subfigure[KIM1(m=50)]{\includegraphics[width=3.0in,height=2.0in]{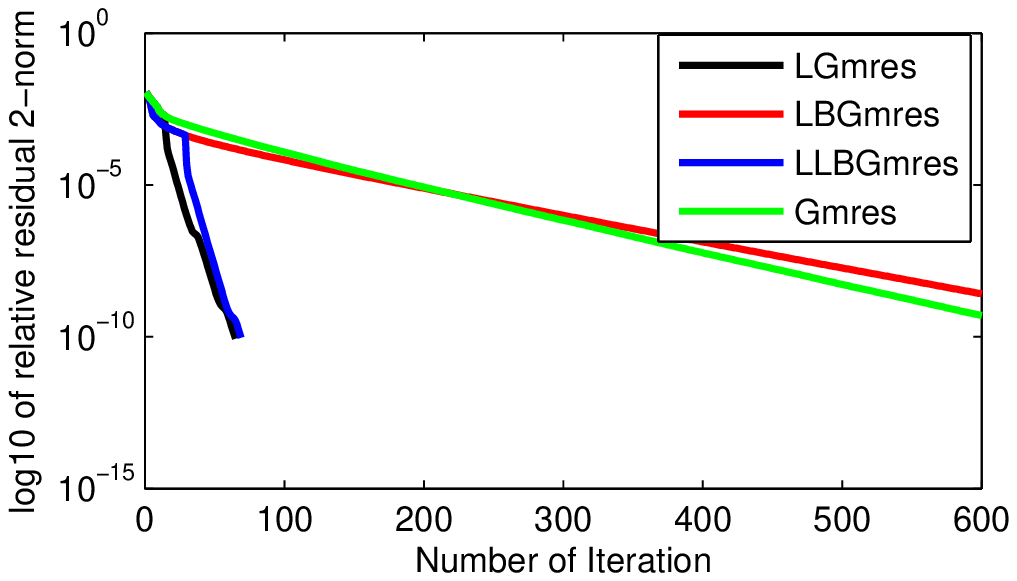}}
\label{}
\end{minipage}
\end{figure}

\begin{figure}[!htbp]
\begin{minipage}[t]{0.5\linewidth}
\centering
\subfigure[ACTIVSg2000(m=50)]{\includegraphics[width=3.0in,height=2.0in]{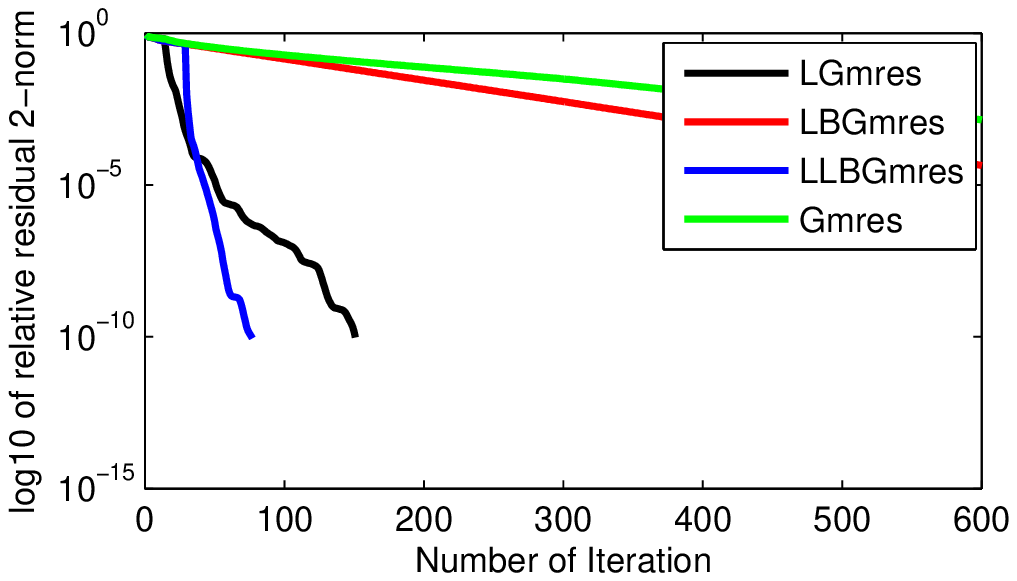}}
\label{fig:side:a}
\end{minipage}%
\begin{minipage}[t]{0.5\linewidth}
\centering
\subfigure[MARIO001(m=50)]{\includegraphics[width=3.0in,height=2.0in]{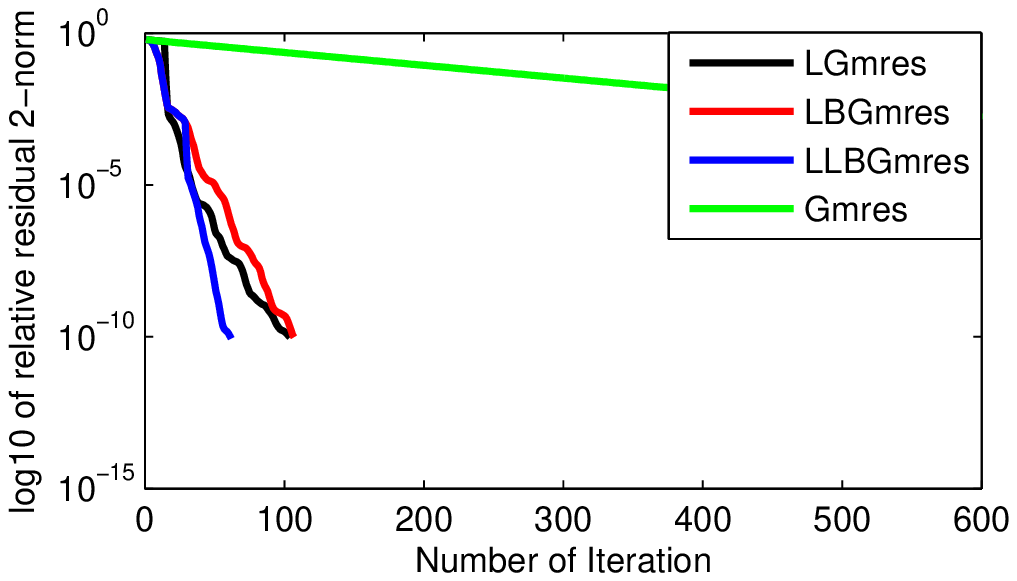}}
\label{}
\end{minipage}
\caption{The relative residual 2-norm history for ACTIVSg2000,COUPLED, KIM1 and MARIO001,when m=50}
\end{figure}

Next, we analyze the numerical results in terms of three aspects: the number of iterations, computation time per one restart cycle ($m$ iterations) and total computation time. The results for $m=30$ and $m=50$ are presented in Tables 1 and 2 respectively for different matrix classes.

First, we analyze the number of iterations(Iter) from the four methods for the different variant of the GMRES(m) method. In most cases, the LGMRES(m) and LLBGMRES(m) methods shows almost the same or lower Iter than the GMRES(m) and LBGMRES(m) methods. In particular, for ACTIVSg2000, COUPLED($m=30$), LGMRES(m) and LLBGMRES methods converged after about 100 restarts, but GMRES(m) and LBGMRES(m) did not converge after 1000 restarts. For COUPLED($m=30$), CAVITY10, RAJAT03, KIM1, Goodwin-010, the number of restart cycles for LGMRES(m) and LLBGMRES methods is much less than that of GMRES(m) and LBGMRES(m). For CHIPCOOL0($m=30$) and Goodwin-017($m=30$), these four methods have not reached the conditions of convergence. But LGMRES(m), LBGMRES(m) and LLBGMRES methods converge for CHIPCOOL0($m=50$) and Goodwin-017($m=50$). The number of LGMRES(m) and LLBGMRES methods restarts is far less than LBGMRES method. For MARIO001($m=30$), the number of LGMRES(m) method restarts is more than LBGMRES method, but the number of LLBGMRES(m) method restarts is less than LBGMRES method for MARIO001($m=50$). We can see that the smaller restart frequency $m$ leads to a larger difference in Iter between these methods.

Next, we consider the computation time per restart cycle(t-Restar). For $m=30$, LLBGMRES method is the most one, LGMRES method is second, and LBGMRES method is third. For $m=50$, the gap between their consumption of time is reduced. In terms of the total computation time(t-Total), the time used by LGMRES and LLGMRES is relatively small. Although they have more time per cycle, they can quickly converge. So the total time used is relatively small. For MARIO001($m=30$), the speed of acceleration is not ideal because LLBGMRES takes much more time per cycle, resulting in more total consumption time.

%\newpage

 \begin{table}[!htpb]
 \caption{\scriptsize Test problems (n: order of matrix, Nnz : number of nonzeros in matrix) and  convergence results (Iter: number of restarts, tTotal: total omputation time, tRestart:
computation time per one restart cyle) of the GMRES(m), LBGMRES, LGMRES and LLBGMRES, where $m=30$.}
\label{Tab:01}
 \centering
 \begin{tabular}{c c c c c c } %±í¸ñ7ÁÐ È«²¿¾ÓÖÐÏÔʾ
 \hline
 Matrix & & Solver & Iter & Time[sec.]& \\
 n & Nnz & & & t-Total & t-Restar\\
 \hline
 COUPLED & & LGMRES & 104 & 8.58e0 & 8.25e-2\\
11341 & 21199 & LBGMRES & \dag & 1.29e2 & 1.29e-1\\
 &  &  LLBGMRES & 92 & 8.41e0 & 9.14e-2\\
 &  & 	GMRES & \dag & 7.64e1 & 7.64e-2\\
 \hline
CAVITY10 &  &  LGMRES & 108 & 4.76e0 & 4.40e-2\\
2597 & 76367 &  LBGMRES & 927 & 2.56e1 & 2.76e-2\\
 &   & LLBGMRES & 69 & 2.07e0 & 3.00e-2\\
 &  & GMRES &\dag & 2.92e1 & 2.92e-2\\
 \hline
RAJAT03 &  &  LGMRES & 79 & 6.18e0 & 7.82e-2\\
7602 & 32653 & LBGMRES & 439 & 2.47e1 & 5.62e-2\\
 &  & LLBGMRES & 41 & 2.41e0 & 5.87e-2\\
 &  & GMRES & \dag & 4.87e1 & 4.87e-2\\
 \hline
KIM1 &  &  LGMRES & 81 & 6.08e1 & 7.50e-1\\
38415 & 933195 & LBGMRES & 974 & 6.40e2 & 6.57e-1\\
 &  & LLBGMRES & 86 & 6.93e1 & 8.05e-1\\
 &  & GMRES & 961 & 6.04e2 & 6.28e-1\\
 \hline
NS3DA &  &  LGMRES & 68 & 2. 63e1 & 3.86e-1\\
20414 & 1679599 & LBGMRES & 78 & 2.49e1 & 3.19e-1\\
 &  & LLBGMRES & 60 &  2.41e1 & 4.01e-1\\
 &  & GMRES & 78 &  2.46e1 &  3.15e-1\\
 \hline
CHIPCOOL0 &  &  LGMRES & \dag & 1.85e2 & 1.85e-1\\
20082 & 281150 & LBGMRES & \dag & 1.28e2 & 1.28e-1\\
 &  & LLBGMRES & \dag & 2.08e2 & 2.08e-1\\
 &  & GMRES & \dag & 1.33e2 & 1.33e-1\\
 \hline
WAVEGUIDE3D &  & LGMRES & 928 & 5.56e2 & 5.99e-1\\
21036 & 303468 & LBGMRES & \dag & 4.09e2 & 4.09e-1\\
 &  & LLBGMRES & 912 & 5.53e2 & 6.06e-1\\
 &  & GMRES & \dag & 3.88e2 & 3.88e-1\\
 \hline
MEMPLUS &  &  LGMRES &  76 & 6.99e0 & 9.19e-2\\
17758 & 99147 &  LBGMRES &  76 & 6.57e0 & 8.64e-2\\
 &  & LLBGMRES &  73 & 7.55e0 & 1.03e-1\\
 &  & GMRES &  221 & 2.07e1 & 9.36e-2\\
 \hline
MAJORBASIS &  &  LGMRES & 7 &  8.81e0 &  1.25e0\\
160000 & 1750416 &  LBGMRES & 7 & 9.35e0 &  1.33e0\\
 &  & LLBGMRES & 7 & 9.30e0 & 1.32e0\\
 &  & GMRES &  7 & 9.20e0 &  1.31e0\\
 \hline
PFINAN512 &  &   LGMRES &  28 &  1.17e1 & 4.17e-1\\
74752 & 596992 & LBGMRES & 121 & 6.44e1 & 5.32e-1\\
 &  & LLBGMRES & 34 & 1.50e1 &  4.41e-1\\
 &  & GMRES & \dag &  4.42e2 &  4.42e-1\\
 \hline
\end{tabular}
 \end{table} \
\begin{table}[!htbp]
 \centering
 \begin{tabular}{c c c c c c } %±í¸ñ7ÁÐ È«²¿¾ÓÖÐÏÔʾ
  \hline
WANG3 &  & LGMRES & 21 & 2.96e0 &  1.40e-1\\
26064 & 177168 & LBGMRES &  26 & 3.96e0 & 1.52e-1\\
 &  & LLBGMRES & 26 &  4.96e0 & 1.90e-1\\
 &  & GMRES & 28 &  3.92e0 & 1.40e-1\\
 \hline
THERMAL1 &  &  LGMRES & 98 &  5.44e1 &  5.55e-1\\
82654 & 574458 & LBGMRES & 121 &  6.17e
1 &  5.09e-1\\
 &  & LLBGMRES & 94 &  5.77e1 &  6.13e-1\\
 &  & GMRES & \dag & 4.71e2 & 4.71e-1\\
 \hline
EPB1 &   & LGMRES & 44 &  3.73e0 & 8.47e-2\\
14734 & 95053 &  LBGMRES &  67 &  5.63e0 & 8.40e-2\\
 &  & LLBGMRES &  38 & 3.22e0 & 8.47e-2\\
 &  & GMRES &  93 &  7.92e0 &  8.51e-2\\
 \hline
MARIO001 &  &  LGMRES & 152 &  4.14e1 &  2.72e-1\\
38434 & 204912 &  LBGMRES & 133 & 2.98e1 &  2.24e-1\\
 &  & LLBGMRES &  134 & 4.16e1 & 3.10e-1\\
 &  & GMRES & \dag & 2.21e2	 & 2.21e-1\\
 \hline
TUMA2 &  &  LGMRES & 211 & 1.84e1 &  8.72e-2\\
12992 & 49365 &  LBGMRES &  357 & 2.53e1 &  7.08e-2\\
 &  & LLBGMRES & 198 & 1.89e1 & 9.54e-2\\
 &  & GMRES & \dag & 7.92e1 & 7.93e-2\\
 \hline
ACTIVSg2000 &  &   LGMRES & 200 & 6.02e0 & 3.01e-2\\
4000 & 28505 & LBGMRES & \dag & 2.41e1 & 2.41e-2\\
 &  & LLBGMRES & 131 &  3.76e0 & 2.87e-2\\
 &  & GMRES &  \dag& 2.44e1 & 2.44e-2\\
 \hline
Goodwin 010 &  & LGMRES & 26 &  2.70e-1 & 1.03e-2\\
1182 & 32282 &  LBGMRES & 194 & 1.64e0 &  8.36e-3 \\
 &  & LLBGMRES & 36 & 3.45e-1 & 9.83e-3\\
 &  & GMRES &  454 &  3.86e0 &  8.50e-3\\
 \hline
Goodwin 017 &  &  LGMRES & \dag & 4.18e1 &  4.18e-2\\
3,317 & 97,773 &  LBGMRES & \dag & 4.30e1 &  4.30e-2\\
 &  & LLBGMRES & \dag & 3.40e1 &  3.40e-2\\
 &  & GMRES & \dag & 3.22e1 & 3.22e-2\\
\hline
 \end{tabular}
 \end{table} \
\begin{table}[!htpb]
\caption{\scriptsize Test problems(n: order of matrix, Nnz : number of nonzeros in matrix) and convergence results (Iter: number of restarts, tTotal: total omputation time, tRestart:  computation time
per one restart cyle) of the GMRES(m), LBGMRES, LGMRES and LLBGMRES, where $m=50$.}
\label{Tab:02}
 \centering
 \begin{tabular}{c c c c c c } %±í¸ñ7ÁÐ È«²¿¾ÓÖÐÏÔʾ
  \hline
  Matrix &  & Solver & Iter & Time[sec.] &\\
n & Nnz &  &  & t-Total & t-Restar\\
  \hline
COUPLED &  & LGMRES & 57 & 1.16e1 & 2.03e-1\\
11341 & 21199 & LBGMRES & 528 & 9.90e1 & 1.87e-1\\
 &  & LLBGMRES & 69 & 1.44e1 & 2.08e-1\\
 &  & GMRES & \dag & 1.05e2 & 1.75e-1\\
   \hline
CAVITY10 &  &  LGMRES &  55 & 3.47e0 & 6.3e-2\\
2597 & 76367 & LBGMRES & 221 & 1.23e1 & 5.56e-2\\
 &  & LLBGMRES & 46 & 2.77e0 & 6.02e-2\\
 &  & GMRES & \dag & 6.62e1 & 1.10e-1\\
   \hline
RAJAT03 &  &  LGMRES &  73 &  7.56e0 &  1.03e-1\\
7602 & 32653 &  LBGMRES & 187 & 2.14e1 & 1.14e-1\\
 &  & LLBGMRES & 35 & 3.77e0 & 1.07e-1\\
 &  & GMRES & \dag & 6.48e1 & 1.08e-1\\
   \hline
KIM1 &  &  LGMRES &  65 & 9.44e1 &  1.45e1\\
38415 & 933195 &  LBGMRES & \dag &  8.61e2 & 1.43e1\\
 &  & LLBGMRES & 69 & 1.06e2 & 1.53e1\\
 &  & GMRES & \dag & 8.27e2 & 1.37e1\\
   \hline
NS3DA &  &  LGMRES &  37 &  2.18e1 &  5.89e-1\\
20414 & 1679599 &  LBGMRES &  39 &  2.23e1 &  5.71e-1\\
 &  & LLBGMRES & 33 & 2.00e1 & 6.00e-1\\
 &  & GMRES & 47 & 2.89e1 & 6.14e-1\\
   \hline
CHIPCOOL0 &  & LGMRES & 513 &  1.48e2 & 2.88 e-1\\
20082 & 281150 & LBGMRES & 555 &  1.75e2 & 3.15 e-1\\
 &  & LLBGMRES & 479 & 1.55e2 & 3.23 e-1\\
 &  & GMRES & \dag & 1.24e2 & 2.08e-1\\
   \hline
WAVEGUIDE3D &  &  LGMRES &  589 & 6.18e2 &  1.04e0\\
21036 & 303468 &  LBGMRES & \dag &  5.36e2 &  8.93e-1\\
 &  & LLBGMRES & 591 & 6.27e2 & 1.06e0\\
 &  & GMRES & \dag & 5.39 e2 & 8.98e-1\\
   \hline
MEMPLUS	 &  &  LGMRES &  43 & 1.11e1 & 2.50e-1\\
17758 & 99147 & LBGMRES & 39 &  8.14e0 & 2.08e-1\\
 &  & LLBGMRES & 38 & 8.33e0 & 2.19e-1\\
 &  & GMRES & 118 & 2.89e1 & 2.44e-1\\
   \hline
MAJORBASIS &  &  LGMRES & 5 &  2.25e1 &  4.50e-1\\
160000 & 1750416 &  LBGMRES & 5 & 2.23e1 &  4.46e-1\\
 &  & LLBGMRES & 5 & 1.94e1 & 3.88e-1\\
 &  & GMRES & 5 & 1.88e1 & 3.76e-1\\
   \hline
PFINAN512 &  &  LGMRES &  23 &  3.16e1 &  1.37e0\\
74752 & 596992 &  LBGMRES &  58 & 7.72e1 &  1.33e0\\
 &  & LLBGMRES & 32 & 3.99e1 & 1.24e0\\
 &  & GMRES & \dag & 7.65e2 & 1.27e0\\
\hline
\end{tabular}
\end{table} \

\newpage
\begin{table}[!htpb]
\centering
\begin{tabular}{c c c c c c } %±í¸ñ7ÁÐ È«²¿¾ÓÖÐÏÔʾ
\hline
WANG3 &  &  LGMRES &  16 &  5.78e0 &  3.61e-1\\
26064 & 177168 &  LBGMRES &  15 & 4.70e0 &  3.13e-1\\
 &  & LLBGMRES & 15 & 5.89e0 & 3.92e-1\\
 &  & GMRES & 19 & 7.50e0 & 3.94e-1\\
   \hline
THERMAL1 &  &  LGMRES &  51 &  7.08e1 &  1.38e0\\
82654 & 574458 & LBGMRES &  62 &  9.00e1 &  1.45e0\\
 &  & LLBGMRES & 43 & 6.75e1 & 1.56e0\\
 &  & GMRES &  \dag& 7.01e2 & 1.16e0\\
   \hline
EPB1 &  &  LGMRES & 25 & 5.41e0	 & 2.16e-1\\
14734 & 95053 &  LBGMRES &  36 &  6.86e0 &  1.90e-1\\
 & &  LLBGMRES & 31 & 5.83e0 & 1.88e-1\\
 &  & GMRES & 41 & 8.74e0 & 2.13e-3\\
   \hline
MARIO001 &  &  LGMRES & 104 &  6.21e1 &  5.97e-1\\
38434 & 204912 &  LBGMRES &  107 &  7.38e1 &  6.89e-1\\
 &  & LLBGMRES & 62 & 3.45e1 & 5.56e-1\\
 &  & GMRES &\dag  & 3.67e2 & 6.11e-1\\
   \hline
TUMA2 &  &  LGMRES &  186 &  4.32e1 &  2.32e-1\\
12992 & 49365 & LBGMRES &  232 & 5.44e1 & 2.34e-1\\
 &  & LLBGMRES & 129 & 2.88e1 & 2.23e-1\\
 &  & GMRES & \dag & 1.29e2 & 2.15e-1\\
   \hline
ACTIVSg2000 &  &  LGMRES &  151 &  1.61e1 &  1.06e-1\\
4000 & 28505 &  LBGMRES	 &\dag & 4.78e1 &  7.96e-2\\
 &  & LLBGMRES & 77 & 5.75e0 & 7.23e-2\\
 &  & GMRES & \dag & 4.97e1 & 8.28e-2\\
   \hline
Goodwin 010 &  &  LGMRES &  22 &  5.02e-1 & 2.28e-2\\
1182 & 32282 &  LBGMRES & 83 & 2.66e0 & 3.20e-2\\
 &  & LLBGMRES & 33 & 9.86e-1 & 2.98e-2\\
 &  & GMRES &  165 &  2.65e0 & 1.60e-2\\
   \hline
Goodwin 017 &  &  LGMRES &  27 &  1.93e0 &  7.14e-2\\
3,317 & 97,773 &  LBGMRES & 496 & 3.83e1 & 7.72e-2\\
 &  & LLBGMRES & 38 & 3.17e0 & 8.34e-2\\
 &  & GMRES & \dag &  4.96e1 &  7.81e-2\\

  \hline
 \end{tabular}
 \end{table} \
%\newpage

\section{Conclusion}

In this paper, GMRES(m) and Look-Back GMRES(m) method are speeded up with the idea of projection, and the acceleration effect is quite stable and satisfactory. Consider the relationship between the increments of  $x$ at each restart. Reassemble $z^{(1)}$, $z^{(2)}$,$\ldots$, and $z^{(l)}$ to achieve their optimal combination, so that the residual norm decreases again. Numerical experiments showed it has the more efficient convergence than the GMRES(m) method widely used to solve large sparse nonsymmetric linear systems. For some problems, the value of their $l$ may be appropriately larger so that the speed of convergence will be faster. In the future work, how to establish an adaptive $l$ value that varies with different problems will be investigated. We will theoretically analyze the acceleration principle and make it better in performance.

\end{document}